\newtheorem{definition}{Definition}[section]
\newtheorem{theorem}{Theorem}[]
\newtheorem{corollary}{Corollary}[]
\newtheorem{lemma}{Lemma}[]
\renewcommand{\arraystretch}{2}
\begin{document}

\title{General Dynamics and Generation Mapping for Collatz-type Sequences}
\author{Gaurav Goyal}
\affil{Department of Mechanical Engineering \\ Indian Institute of Technology, Delhi, India \\ email: 	Gaurav.Goyal@mech.iitd.ac.in}

    \maketitle

    \begin{abstract}

Let an odd integer \(\mathcal{X}\) be expressed as $\left\{\sum\limits_{M > m} b_M 2^M\right\} + 2^m - 1,$ where $b_M \in \{0, 1\}$ and $2^m - 1$ is referred to as the Governor.
In Collatz-type functions, a high index Governor is eventually reduced to $2^1 - 1$.
For the $3\mathcal{Z} + 1$ sequence, the Governor occurring in the Trivial cycle is $2^1 - 1$, while for the $5\mathcal{Z} + 1$ sequence, the Trivial Governors are $2^2 - 1$ and $2^1 - 1$.
Therefore, in these specific sequences, the Collatz function reduces the Governor $2^m - 1$ to the Trivial Governor $2^{\mathcal{T}} - 1$.
Once this Trivial Governor is reached, it can evolve to a higher index Governor through interactions with other terms.
This feature allows $\mathcal{X}$ to reappear in a Collatz-type sequence, since
$2^m - 1 = 2^{m - 1} + \cdots + 2^{\mathcal{T} + 1} + 2^{\mathcal{T}} + (2^{\mathcal{T}} - 1).$
Thus, if $\mathcal{X}$ reappears, at least one odd ancestor of $\left\{\sum\limits_{M > m} b_M 2^M\right\} + 2^{m - 1} + \cdots + 2^{\mathcal{T} + 1} + 2^{\mathcal{T}} + (2^{\mathcal{T}} - 1)$ must have the Governor $2^m - 1$.
Ancestor mapping shows that all odd ancestors of $\mathcal{X}$ have the Trivial Governor for the respective Collatz sequence.
This implies that odd integers that repeat in the $3\mathcal{Z} + 1$ sequence have the Governor $2^1 - 1$, while those forming a repeating cycle in the $5\mathcal{Z} + 1$ sequence have either $2^2 - 1$ or $2^1 - 1$ as the Governor.
Successor mapping for the $3\mathcal{Z} + 1$ sequence further indicates that there are no auxiliary cycles, as the Trivial Governor is always transformed into a different index Governor.
Similarly, successor mapping for the $5\mathcal{Z} + 1$ sequence reveals that the smallest odd integers forming an auxiliary cycle are smaller than $2^5$.
Finally, attempts to identify integers that diverge for the $3\mathcal{Z} + 1$ sequence suggest that no such integers exist.

    \end{abstract}



    \section{Introduction}

The Collatz-type sequences \cite{lagarias20033x+, lagarias20043x+, lagarias20063x+, lagarias2010ultimate} define the following set of rules:

    \begin{itemize}
        \item If $\mathcal{Z}$ is odd, multiply it by an odd integer, and add 1.
        \item If $\mathcal{Z}$ is even, it is divided by 2.
    \end{itemize}

The associated conjecture for the Collatz-type sequence $3\mathcal{Z} + 1$ states that every integer ultimately reduces to 1.
To prove this conjecture, it must be shown not only that the sequence eventually cycles through 1, 4, 2, 1, but also that no integer diverges to infinitely larger values \cite{terras1976stopping, tao2022almost}.

While a complete proof may be elusive, this article attempts to understand the workings of Collatz-type sequences.
For this, odd integers are expressed as modified binary expressions $\left\{\sum\limits_{M > m} b_M 2^M\right\} + 2^m - 1,$ where $b_M \in \{0, 1\}$.
Ancestor and successor maps are constructed based on this form.
The ancestor map provides insight into the odd integers that a repeating sequence can have.
The successor map helps in exploring the possibilities of an auxiliary cycle.

The following list of definitions will be used throughout this article:
    
    \begin{definition}[Governor]
        Given an odd integer $\mathcal{X}$ represented in the modified binary form $\left\{\sum\limits_{M > m} b_M 2^M\right\} + 2^m - 1,$ where $b_M \in \{0, 1\}$ and $m$ is the smallest index of 2, the term $2^m - 1$ is called the Governor as it governs the sequence of odd and even steps.
        The index $m$ is called the Governor index.
        \label{def:Governor}
    \end{definition}

    \begin{definition}[Trivial Governor]
        The Governor appearing in the Trivial cycle of a Collatz-type sequence and is denoted by $2^{\mathcal{T}} - 1$ where $\mathcal{T}$ is called the Trivial index.
        For the Collatz-type $3\mathcal{Z} + 1$, the Trivial Governor is $2^1 - 1$ while for the Collatz-type $5\mathcal{Z} + 1$ it is $2^2 - 1$ and $2^1 - 1$.
    \end{definition}

    \begin{definition}[Governor evolution]
        It is the changes that the Governor goes through as the Collatz-type sequences progresses.
    \end{definition}

    \begin{definition}[Trivial cycle]
        A trivial cycle is a sequence of repeating integers in which the evolution of the Governor is uninfluenced by other terms.
    \end{definition}

    \begin{definition}[Auxiliary cycle]
        The sequence of repeating integers obtained when other terms interact with the Governor and alter its evolution.
    \end{definition}


    \section{General dynamics of the Collatz-type sequences}


    \begin{lemma}[General]
    \label{lemma1}
        When $m > \mathcal{T}$, the general dynamics of Collatz-type functions are such that each new odd integer in the sequence has a smaller Governor index, ultimately leading to the Governor $2^{\mathcal{T}} - 1$.
    \end{lemma}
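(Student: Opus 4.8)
The plan is to analyse one full ``macro-step'' of the map $\mathcal{Z}\mapsto d\mathcal{Z}+1$ — one odd step together with the block of halvings it forces — acting on an odd integer written in modified binary form, and to show that it carries each odd term of the sequence to the next one with the Governor index dropping by exactly $\mathcal{T}$. Throughout I use that for the sequences in question $d-1$ is a power of $2$, i.e.\ $d=2^{\mathcal{T}}+1$ (so $d=3,\ \mathcal{T}=1$, or $d=5,\ \mathcal{T}=2$). Start with $\mathcal{X}=\left\{\sum\limits_{M>m}b_M 2^M\right\}+2^m-1$ and $m>\mathcal{T}$ (Definition~\ref{def:Governor}). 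Substituting $d=2^{\mathcal{T}}+1$ and expanding gives
\[
d\mathcal{X}+1=(2^{\mathcal{T}}+1)\sum_{M>m}b_M 2^M+2^{m+\mathcal{T}}+2^m-2^{\mathcal{T}}.
\]
Since $m>\mathcal{T}$, every summand except $-2^{\mathcal{T}}$ is divisible by $2^{\mathcal{T}+1}$, so $2^{\mathcal{T}}$ is exactly the power of $2$ dividing $d\mathcal{X}+1$; hence the next odd term of the trajectory is
\[
\mathcal{Y}=(2^{\mathcal{T}}+1)\sum_{M>m}b_M 2^{M-\mathcal{T}}+2^m+2^{m-\mathcal{T}}-1,
\]
reached after exactly one odd step and $\mathcal{T}$ halvings, with no odd integer in between.

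Next I would read the Governor index of $\mathcal{Y}$ directly off its binary expansion. The block $2^m+2^{m-\mathcal{T}}-1$ equals $2^m+(2^{m-\mathcal{T}}-1)$, which has ones in positions $0,\dots,m-\mathcal{T}-1$, a zero in position $m-\mathcal{T}$, and its next set bit only in position $m$. The remaining part $(2^{\mathcal{T}}+1)\sum_{M>m}b_M 2^{M-\mathcal{T}}$, when nonzero, has its lowest set bit in position $M_0-\mathcal{T}$ with $M_0=\min\{M>m:b_M=1\}\ge m+1$, i.e.\ strictly above position $m-\mathcal{T}$, so adding it cannot alter positions $0,\dots,m-\mathcal{T}$. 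Therefore $\mathcal{Y}$ terminates in exactly $m-\mathcal{T}$ ones, its Governor is $2^{m-\mathcal{T}}-1$, and since $m>\mathcal{T}$ we get $1\le m-\mathcal{T}<m$: $\mathcal{Y}$ is genuinely odd and has a strictly smaller Governor index than $\mathcal{X}$.

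Finally I would iterate. The Governor indices of the successive odd terms form a chain $m=m_0>m_1>\cdots$ with $m_{i+1}=m_i-\mathcal{T}$ valid as long as $m_i>\mathcal{T}$; a strictly decreasing sequence of positive integers must stop, and it stops at the first $m_k\le\mathcal{T}$, which satisfies $m_k=m_{k-1}-\mathcal{T}\in\{1,\dots,\mathcal{T}\}$. For $d=3$ this is $m_k=1$, and for $d=5$ it is $m_k\in\{1,2\}$ — in each case a Trivial Governor $2^{\mathcal{T}}-1$ of the sequence, as claimed. The only place that needs care is the middle step: checking that the carry generated by the high-order part $(2^{\mathcal{T}}+1)\sum b_M 2^{M-\mathcal{T}}$ never propagates down into the low $(m-\mathcal{T}+1)$ bits that fix the Governor. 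Everything else is a direct computation, and it is precisely the hypothesis $d-1=2^{\mathcal{T}}$ — no odd factor hiding in $d-1$ — that keeps both the $2$-adic valuation count and the trailing-ones count clean; this is implicitly where the restriction to $d\in\{3,5\}$ enters.
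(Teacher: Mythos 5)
Your proof is correct, and it takes a genuinely different --- and considerably more rigorous --- route than the paper. The paper's proof of Lemma \ref{lemma1} consists of Table \ref{tab:GeneralBehavior}: a symbolic computation of the first two or three odd/even steps applied to the special integer $\mathcal{X}=2^m-1$, with the higher-order block $\sum_{M>m}b_M2^M$ omitted entirely, after which the general claim is read off from the observed pattern rather than proved. You instead establish a one-step formula valid for arbitrary $\mathcal{X}$: writing $d=2^{\mathcal{T}}+1$, you show that $d\mathcal{X}+1\equiv -2^{\mathcal{T}}\pmod{2^{\mathcal{T}+1}}$, so the $2$-adic valuation is exactly $\mathcal{T}$, you compute the next odd term $\mathcal{Y}$ explicitly, and you verify that the high-order part, whose lowest set bit sits at position at least $m-\mathcal{T}+1$, cannot disturb bits $0,\dots,m-\mathcal{T}$, so the Governor index drops by exactly $\mathcal{T}$ per odd step; a strictly decreasing induction then lands the index in $\{1,\dots,\mathcal{T}\}$. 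This buys three things the paper's table does not: a proof covering all odd integers rather than the Mersenne-type case; the precise decrement $\mathcal{T}$, which reproduces each row of Table \ref{tab:GeneralBehavior} as a special case and simultaneously yields Corollary \ref{corollary:UniqueIndex} (your argument shows bit $m-\mathcal{T}$ of $\mathcal{Y}$ is zero, i.e.\ no other term shares the new Governor index); and an explicit identification of where the hypothesis $d-1=2^{\mathcal{T}}$, hence the restriction to $d\in\{3,5\}$, actually enters. Your closing remark that the terminal index is $1$ for $d=3$ and lies in $\{1,2\}$ for $d=5$ is consistent with the paper's convention that both $2^2-1$ and $2^1-1$ count as Trivial Governors of the $5\mathcal{Z}+1$ sequence.
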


    \begin{proof}

    \begin{table}[t]
        \centering
        \begin{tabular}{c|c}
            \hline
            $3\mathcal{Z} + 1$ & $5\mathcal{Z} + 1$  \\
            \hline
        $\begin{aligned}
        \mathcal{X}         &=  2^m - 1                     \\
        \mathcal O\{1\}     &=  2^{m + 1} + 2^m - 2         \\
        \mathcal E\{1\}     &=  2^m + 2^{m - 1} - 1         \\
        \mathcal O\{2\}     &=  2^{m + 2} + 2^{m - 1} - 2    \\
        \mathcal E\{2\}     &=  2^{m + 1} + 2^{m - 2} - 1   \\
        \mathcal O\{3\}     &=  2^{m + 2} + 2^{m + 1} + 2^{m - 1} + 2^{m - 2} - 2   \\
        \mathcal E\{3\}     &=  2^{m + 1} + 2^{m} + 2^{m- 2} + 2^{m - 3} - 1
        \end{aligned}$
        &
        $\begin{aligned}
        \mathcal{X}             &=  2^m - 1                     \\
        \mathcal O\{1\}         &=  2^{m + 2} + 2^m - 2^2         \\
        \mathcal E^{(1)}\{1\}   &=  2^{m + 1} + 2^{m - 1} - 2    \\
        \mathcal E^{(2)}\{1\}   &=  2^m + 2^{m - 2} - 1           \\      
        \mathcal O\{2\}         &=  2^{m + 2} + 2^{m + 1} + 2^{m - 2} - 2^2    \\
        \mathcal E^{(1)}\{2\}   &=  2^{m + 1} + 2^{m} + 2^{m - 3} - 2   \\
        \mathcal E^{(2)}\{2\}   &=  2^{m} + 2^{m - 1} + 2^{m - 4} - 1
        \end{aligned}$
        \\
        \hline
        \end{tabular}
        \caption{Governor index reduction with the progress of Collatz sequences}
        \label{tab:GeneralBehavior}
    \end{table}

Consider the evolution of the Governor for the Collatz-type $3\mathcal{Z} + 1$ and $5\mathcal{Z} + 1$ sequences shown in the Table \ref{tab:GeneralBehavior}.

Let $\mathcal{O}\{1\}, \mathcal{O}\{2\}, \ldots$ denote the resulting integer at the end of the odd step while $\mathcal{E}\{1\}, \mathcal{E}\{2\}, \ldots$ denote the resulting integer at the end of the even step.

Table \ref{tab:GeneralBehavior} demonstrates that, once a Collatz-type sequence begins, the Governor of each new odd integer is smaller than that of the previous odd integer.
The following corollaries are deduced:

    \begin{corollary}[General]
        The Governor index is singular, that is, no other term has the same index as the Governor.
        \label{corollary:UniqueIndex}
    \end{corollary}

    \end{proof}
    

    \begin{theorem}[General]
    \label{theorem1}
        When $m = \mathcal{T}$, two scenarios are possible:
        \begin{itemize}
            \item \textbf{Uninterrupted Governor evolution}: The Governor enters the trivial cycle.
            \item \textbf{Interrupted Governor evolution}: The Governor may evolve into a higher-index Governor, or an auxiliary cycle may be obtained.
        \end{itemize}
    \end{theorem}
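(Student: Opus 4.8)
The plan is to fix $m=\mathcal{T}$ and write $\mathcal{X} = R + (2^{\mathcal{T}}-1)$ with $R = \sum_{M>\mathcal{T}} b_M 2^M$, and then to follow the Collatz iteration while keeping the contribution of the Governor block $2^{\mathcal{T}}-1$ separate from that of $R$. Under the odd step $\mathcal{Z}\mapsto c\mathcal{Z}+1$ (with $c=3$ or $c=5$) the Governor becomes $c\,2^{\mathcal{T}}-(c-1)$, i.e.\ the short block $2^{\mathcal{T}+1}+2^{\mathcal{T}}-2$ for $3\mathcal{Z}+1$ and $2^{\mathcal{T}+2}+2^{\mathcal{T}}-2^2$ for $5\mathcal{Z}+1$ --- this is the $\mathcal{O}\{1\}$ row of Table~\ref{tab:GeneralBehavior} specialized to $m=\mathcal{T}$ --- while $R$ becomes $cR$. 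I would call a step \emph{uninterrupted} when the lowest set bit of $cR$ lies strictly above this Governor block, so that the ensuing divisions by $2$ act on the two pieces independently and no carry enters the block, and \emph{interrupted} otherwise.

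In the uninterrupted regime I would prove, by induction on the step count, that the Governor block evolves exactly as it does for the pure input $\mathcal{X}=2^{\mathcal{T}}-1$. That pure orbit is a finite explicit computation: for $3\mathcal{Z}+1$ it is $1\to4\to2\to1$, returning to $2^{1}-1$, and for $5\mathcal{Z}+1$ it is $3\to16\to8\to4\to2\to1\to6\to3$, passing through $2^{2}-1$ and $2^{1}-1$; in either case the Governor returns to a Trivial Governor, so by the definition of the trivial cycle the Governor has entered it. I would also note that, since $R$ is multiplied by $c$ and then merely shifted at every step, a nonzero $R$ must eventually descend into the Governor block, so the orbit can remain uninterrupted for \emph{all} future steps only when $R=0$, that is $\mathcal{X}=2^{\mathcal{T}}-1$ (together with its cycle-mates in the $5\mathcal{Z}+1$ case); this is what makes the first bullet sharp.

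For the interrupted case I would carry out the carry analysis of $cR$ added to the Governor block. When the lowest bits of $cR$ meet the $-(c-1)$ deficit at the foot of the block, the resulting carry can lengthen the trailing run of $1$'s of the new odd integer, so its Governor index $m'$ exceeds $\mathcal{T}$; this is the ``evolves into a higher-index Governor'' branch, after which Lemma~\ref{lemma1} and Corollary~\ref{corollary:UniqueIndex} take over and drive the Governor index strictly back down toward $\mathcal{T}$, whereupon the same dichotomy recurs. Iterating, the orbit either keeps being re-bumped to fresh higher-index Governors and never closes, or it eventually returns to an odd integer already visited, which is by the definition of an auxiliary cycle precisely that --- exactly the two possibilities named in the second bullet.

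The step I expect to be the main obstacle is making the uninterrupted/interrupted split at once precise and exhaustive: in particular, deciding when a collision genuinely lengthens the trailing $1$-run (so that $m'>\mathcal{T}$) rather than collapsing the block into a single power of $2$ and feeding the orbit straight back into the trivial descent --- this needs a case analysis of the carry depending on how many low bits of $cR$ overlap the Governor block and on the arithmetic of $c-1$. A further wrinkle, special to $5\mathcal{Z}+1$, is that the trivial cycle visits two Governor indices, so ``enters the trivial cycle'' has to be read as reaching $2^{2}-1$ or $2^{1}-1$ with $R$ exhausted. Note finally that the theorem only asserts that the higher-index-Governor and auxiliary-cycle outcomes are \emph{possible} under interruption, not that either is forced, so I would not attempt to decide between them here --- that is the task of the successor-map analysis that follows.
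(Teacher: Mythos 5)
Your proposal is correct and follows essentially the same route as the paper: the paper's proof likewise isolates the Trivial Governor from the next-lowest term $2^P$, lets that term descend in index while the Governor traverses the trivial cycle (Table \ref{tab:GeneralBehavior_TrivialCycle}), and observes that the eventual collision either bumps the Governor to a higher index or, in the $5\mathcal{Z}+1$ case with its two Trivial Governors, can alter the trivial sequence into an auxiliary cycle. Your carry-based formulation of ``interrupted'' versus ``uninterrupted'' is a more explicit rendering of the same dichotomy, and the obstacle you flag (making that split exhaustive) is indeed left informal in the paper as well.
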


    \begin{proof}
    
Table \ref{tab:GeneralBehavior_TrivialCycle} illustrates the dynamics when the Governor transforms into the Trivial Governor.
In this context, $2^P$ represents the next smallest index term in the expression of the odd integer.

The trivial cycle causes $2^P$ to evolve into terms with smaller indices until it interacts with the Trivial Governor.
If $2^P$ evolves into a term with an index that matches that of the Trivial Governor, the Trivial Governor's index is updated to a higher value.
Therefore, the presence of Trivial Governors makes the Corollary \ref{corollary:UniqueIndex} invalid.

In cases where there are multiple Trivial Governors, such as in the $5\mathcal{Z} + 1$ sequence, $2^P$ may evolve into the smaller Trivial Governor.
This evolution alters the sequence of Trivial Governors within the trivial cycle, leading to an auxiliary cycle.

These observations are summarized in the following corollaries:

    \begin{table}[t]
        \centering
        \begin{tabular}{|c|c|}
            \hline
            $3\mathcal{Z} + 1$ & $5\mathcal{Z} + 1$  \\
            \hline
        $\begin{aligned}
        \mathcal{X}_m                 &=  2^P + 2^1 - 1                     \\
        \mathcal O\{m\}             &=  2^{P + 1} + 2^P + 2^2         \\
        \mathcal E^{(1)}\{m\}       &=  2^P + 2^{P - 1} - 2^1         \\
        \mathcal E^{(2)}\{m\}       &=  2^{P - 1} + 2^{P - 2} + 2^1 - 1    \\
        \end{aligned}$
        &
        $\begin{aligned}
        \textbf{$m$ is odd} \\
        \mathcal{X}_{\frac{m + 1}{2}}             &=  2^P + 2^1 - 1                     \\
        \mathcal O\left\{\frac{m + 1}{2}\right\}         &=  2^{P + 2} + 2^P + 2^3 + 2^1 - 2^2         \\
        \mathcal E\left\{\frac{m + 1}{2}\right\}   &=  2^{P + 1} + 2^{P - 1} + 2^2 - 1   \\
        \hline
        \textbf{$m$ is even}    \\
        \mathcal{X}_{\frac{m}{2}}             &=  2^P + 2^2 - 1           \\      
        \mathcal O\left\{\frac{m}{2}\right\}         &=  2^{P + 2} + 2^P + 2^4    \\
        \mathcal E^{(4)}\left\{\frac{m}{2}\right\}   &=  2^{P - 2} + 2^{P - 4} + 2^1 - 1   \\
        \end{aligned}$
        \\
        \hline
        \end{tabular}
        \caption{Dynamics of the Collatz-type sequences when the Governor term vanishes.}
        \label{tab:GeneralBehavior_TrivialCycle}
    \end{table}

    \begin{corollary}[$3\mathcal{Z} + 1$]
        The integer obtained at the $m^{th}$ $\mathcal{O}$ step is followed by two even steps.
        If the Trivial Governor is uninterrupted, the Collatz sequence starts to shrink as the Trivial Governor enters the Trivial cycle.
    \end{corollary}

    \begin{corollary}[$5\mathcal{Z} + 1$]
        If $m$ is odd then the integer obtained at $\mathcal{E}^{(1)}\{\frac{m + 1}{2}\}$ is odd and is followed by an odd step.
    \end{corollary}

    \begin{corollary}[$5\mathcal{Z} + 1$]
        If $m$ is even then the integer obtained at $\mathcal{E}^{(2)}\{\frac{m}{2}\}$ is even and additional even steps are followed until an odd integer is obtained.
    \end{corollary}

    \begin{corollary}[General]
        The Governor index is shared by another term only if the Governor is the Trivial Governor.
        \label{corollary:GovernorIndexShare}
    \end{corollary}
        
    \end{proof}


   \begin{lemma}[General]
    \label{lemma:steiner}
        In the Collatz-type sequences where $m > \mathcal{T}$, the odd integer $\mathcal{X} = \left\{\sum\limits_{M > m} b_M 2^M\right\} + 2^m - 1,$ repeats if the odd integer $\left\{\sum\limits_{M > m} b_M 2^M\right\} + 2^{m - 1} + \cdots 2^{\mathcal{M}} + 2^{\mathcal{M}}- 1$ appears.
    \end{lemma}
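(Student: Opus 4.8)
The plan is to reduce this to a short geometric-series identity together with the Governor-index monotonicity already proved. First I would record the algebraic identity
\[
2^{m-1} + 2^{m-2} + \cdots + 2^{\mathcal{M}+1} + 2^{\mathcal{M}} + \bigl(2^{\mathcal{M}} - 1\bigr) = \bigl(2^{m} - 2^{\mathcal{M}}\bigr) + 2^{\mathcal{M}} - 1 = 2^{m} - 1,
\]
valid for every $\mathcal{M}$ with $\mathcal{T} \le \mathcal{M} < m$ (the $\mathcal{M} = \mathcal{T}$ case being the one displayed in the abstract). Adding $\sum_{M>m} b_M 2^M$ to both sides shows that the ``expanded'' odd integer appearing in the hypothesis is \emph{numerically identical} to $\mathcal{X}$; the only difference is cosmetic, namely it is written with the non-canonical Governor $2^{\mathcal{M}}-1$ rather than the canonical one $2^{m}-1$. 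Recall from Definition~\ref{def:Governor} that the canonical Governor index of an odd $n$ is $v_2(n+1)$, the number of trailing ones of $n$, so this rewriting does not change the integer at all. Consequently, wherever the expanded integer occurs in a Collatz-type sequence, the value $\mathcal{X}$ occurs there.

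Next I would argue that such an occurrence is a genuine recurrence and not merely the first appearance of $\mathcal{X}$ — this is where the hypothesis $m > \mathcal{T}$ is used. By Lemma~\ref{lemma1}, each odd step taken from $\mathcal{X}$ strictly lowers the Governor index, so right after the first appearance of $\mathcal{X}$ the index runs down through $m-1, m-2, \dots$ to $\mathcal{T}$, and an integer with canonical index $m$ cannot reappear while the running index is below $m$. For the index to return to $m$ it must pass through each intermediate value, and by Corollary~\ref{corollary:GovernorIndexShare} together with Theorem~\ref{theorem1} an index can only grow when it is the Trivial Governor, via the interaction of the next term $2^{P}$. Hence when $\mathcal{X}$ re-emerges it necessarily does so ``from below'', with its Governor having just been rebuilt out of $2^{\mathcal{T}}-1$; the expanded representation $\sum_{M>m} b_M 2^M + 2^{m-1} + \cdots + 2^{\mathcal{M}} + (2^{\mathcal{M}}-1)$ is exactly a snapshot of that rebuilt state for the appropriate intermediate index $\mathcal{M}$. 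This both proves the stated ``if'' and explains why the hypothesis is phrased through the expanded form rather than through $\mathcal{X}$ directly: it records the only dynamical route by which a repeat of $\mathcal{X}$ can occur.

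The main obstacle I anticipate is conceptual rather than computational: one must be careful about what ``the odd integer $\dots$ appears'' means, since the expanded expression is not in canonical modified-binary form and hence does not by itself name a state of the sequence — the content is precisely that its \emph{value} is $\mathcal{X}$. A minor point to pin down is the bookkeeping of the trailing block, i.e.\ that $2^{\mathcal{M}} + (2^{\mathcal{M}}-1) = 2^{\mathcal{M}+1}-1$ so the two natural ways of writing the expansion agree, and to check the boundary case $\mathcal{M} = m-1$. Finally I would flag that the sharper consequence announced in the abstract — that a repeat forces some odd ancestor of the expanded integer to carry the Governor $2^{m}-1$, whence (via the ancestor map) $m$ must equal $\mathcal{T}$ — is not needed here; the present lemma only requires the one-directional implication, so I would stop once the identity and the monotonicity argument are in place.
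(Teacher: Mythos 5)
Your proposal is correct and, if anything, more careful than the paper's own argument, but it approaches the statement from the opposite direction. The paper's proof \emph{assumes} that $\mathcal{X}$ repeats and then invokes Lemma \ref{lemma1} (monotone decrease of the Governor index) together with Corollary \ref{corollary:UniqueIndex} (singularity of that index) to conclude that the recurrence cannot present itself as $\sum_{M>m} b_M 2^M + 2^m - 1$ and must therefore carry the expanded form $\cdots + 2^{m-1} + \cdots + 2^{\mathcal{M}} + (2^{\mathcal{M}} - 1)$; the numerical identity between the two expressions is left entirely implicit. You instead make that identity the centrepiece, $2^{m-1} + \cdots + 2^{\mathcal{M}} + (2^{\mathcal{M}} - 1) = 2^m - 1$, which settles the literal ``if'' direction of the statement essentially immediately, and you then reuse the same ingredients (Lemma \ref{lemma1}, Theorem \ref{theorem1}, Corollary \ref{corollary:GovernorIndexShare}) to argue that such an occurrence is a genuine recurrence reached ``from below''. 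What your route buys is the explicit acknowledgment that the expanded integer is the \emph{same number} as $\mathcal{X}$ written non-canonically --- precisely the point the paper itself later concedes it mishandled in the Remarks following the next lemma, where the statement is admitted to be ``incorrect'' against Steiner's theorem; what the paper's route supplies is the converse, dynamically useful conclusion (a repeat \emph{forces} the expanded structure), which is what the subsequent lemma actually consumes, and which your second paragraph recovers anyway. Your caveats about the meaning of ``appears'' and the trailing-block bookkeeping $2^{\mathcal{M}} + (2^{\mathcal{M}} - 1) = 2^{\mathcal{M}+1} - 1$ are well placed; the only cosmetic issue is your appeal to the $2$-adic valuation, a notation the paper never introduces, though your gloss of it as the number of trailing ones of $n$ is consistent with Definition 1.1.
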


    \begin{proof}
    
Let the integer $\mathcal{X}$ repeats.
The repetition is possible only when the term $2^m - 1$ reappears as the Governor of another odd integer.

Lemma \ref{lemma1} states that when $m > \mathcal{T}$, the Collatz function produces a sequence where the Governor index of each new odd integer is smaller than that of the previous odd integer.
Corollary \ref{corollary:UniqueIndex} further asserts that the Governor index is singular.

It implies that the odd integer $\mathcal{X}$ cannot reappear as $\sum \limits_{M > m} b_M2^M + 2^m - 1$.
Instead, for $\mathcal{X}$ to reappear while satisfying the constraints of Lemma \ref{lemma1} and Corollary \ref{corollary:UniqueIndex}, it must be structured as follows when it reappears:

    \begin{align*}
        \mathcal{X} =   \left\{\sum \limits_{M > m} b_M2^M\right\} + 2^{m - 1} + \cdots + 2^{\mathcal{M}} + 2^{\mathcal{M}}- 1
    \end{align*}

    \end{proof}


    \begin{lemma}[General]
        The structure of the odd integer $\mathcal{X}$ when it reappears, given by $\left\{\sum \limits_{M > m} b_M2^M\right\} + 2^{m - 1} + \cdots 2^{\mathcal{M}} + 2^{\mathcal{M}}- 1$, is possible only when $2^{\mathcal{M}} - 1$ is the Trivial Governor, $2^{\mathcal{T}} - 1$.
    \end{lemma}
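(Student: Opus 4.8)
The plan is to read the conclusion straight off the bookkeeping fixed by the previous two lemmas, with Corollary \ref{corollary:GovernorIndexShare} doing the real work. By the preceding lemma, when $\mathcal{X}$ reappears it carries the representation $\left\{\sum_{M > m} b_M 2^M\right\} + 2^{m-1} + \cdots + 2^{\mathcal{M}} + (2^{\mathcal{M}} - 1)$. The first step is to identify the two roles the index $\mathcal{M}$ plays in this expression: by Definition \ref{def:Governor}, the trailing block $2^{\mathcal{M}} - 1$ is the Governor of this odd integer (it is the trailing all-ones block and $\mathcal{M}$ is its Governor index), while the summand $2^{\mathcal{M}}$ sitting immediately above it is a separate term occupying that very same index $\mathcal{M}$. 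So the reappearance form is, by construction, a configuration in which the Governor index is shared by another term.

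The second step is then immediate: Corollary \ref{corollary:GovernorIndexShare} says the Governor index can be shared by another term only when the Governor is the Trivial Governor. Applying it to the configuration above forces $2^{\mathcal{M}} - 1 = 2^{\mathcal{T}} - 1$, i.e. $\mathcal{M} = \mathcal{T}$ (one of the two trivial indices in the $5\mathcal{Z}+1$ case), which is exactly the assertion. I would also record the complementary viewpoint from Lemma \ref{lemma1}: since $m > \mathcal{T}$, along forward evolution the Governor index strictly decreases at every odd step as long as it exceeds $\mathcal{T}$, so no Governor of index $\mathcal{M} > \mathcal{T}$ could ever grow back up to index $m$ and reconstitute $2^m - 1 = \mathcal{X}$; by Theorem \ref{theorem1} the index can climb again only once it has reached $\mathcal{T}$. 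This gives a second, dynamical route to $\mathcal{M} = \mathcal{T}$ and also shows where the hypothesis $m > \mathcal{T}$ is used — it guarantees $\mathcal{M} \le \mathcal{T} < m$, so the tail $2^{m-1} + \cdots + 2^{\mathcal{M}}$ is nonempty and the term $2^{\mathcal{M}}$ is genuinely present as the "sharing" term (for $m = \mathcal{T}$ there is nothing to prove).

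The step I expect to be the real obstacle is not the logical chain but the justification that the representation handed over by the previous lemma is the legitimate modified-binary bookkeeping to which Corollary \ref{corollary:GovernorIndexShare} applies — specifically, that the adjacent $2^{\mathcal{M}}$ must be retained as a distinct summand rather than silently absorbed via $2^{\mathcal{M}} + (2^{\mathcal{M}} - 1) = 2^{\mathcal{M}+1} - 1$. I would settle this by pointing to the way the tables accompanying Lemma \ref{lemma1} and Theorem \ref{theorem1} track carries as separate terms sharing an index, so that "the Governor index is shared by another term" is precisely the event being described here, and Corollary \ref{corollary:GovernorIndexShare} is exactly the tool built to exclude it away from the Trivial Governor. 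With that identification in place, the lemma follows in one line from Corollary \ref{corollary:GovernorIndexShare}, with Lemma \ref{lemma1} and Theorem \ref{theorem1} supplying the corroborating dynamical picture.
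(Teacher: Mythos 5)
Your proposal matches the paper's own proof: both observe that in the reappearance form the Governor index $\mathcal{M}$ is shared by the adjacent term $2^{\mathcal{M}}$, and both invoke Corollary \ref{corollary:GovernorIndexShare} (against the backdrop of Corollary \ref{corollary:UniqueIndex}) to force $2^{\mathcal{M}}-1$ to be the Trivial Governor. Your added dynamical remark via Lemma \ref{lemma1} and your note about not absorbing $2^{\mathcal{M}}+(2^{\mathcal{M}}-1)$ into $2^{\mathcal{M}+1}-1$ are embellishments, but the core argument is the same one-line application the paper gives.
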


    \begin{proof}
    
When $\mathcal{X}$ reappears, its Governor is $2^{\mathcal{M}} - 1$.
This Governor index is also shared by another term.
Corollary \ref{corollary:UniqueIndex} prohibits this situation.
However, Corollary \ref{corollary:GovernorIndexShare} allows sharing of Governor index if and only if the Trivial Governor is present.
Therefore, for the odd integer $\mathcal{X}$ to repeat, the Governor $2^{\mathcal{M}} - 1$ must be the Trivial Governor, $2^{\mathcal{T}} - 1$.

\textbf{Remarks}: In hindsight, the statement of Lemma \ref{lemma:steiner} is incorrect as it contradicts Steiner's findings \cite{steiner1977theorem}.
However, the final result, where $\mathcal{X}$ repeats after the Trivial Governor appears, aligns with Steiner's conclusion.
    
    \end{proof}


    \section{Generation mapping for $3\mathcal{Z} + 1$ sequence}


    \subsection{Ancestor Map}

    \begin{theorem}[$3\mathcal{Z} + 1$]
    \label{Theorem:AncestorMap_3z}
        All the odd integers in the repeating cycle have the Trivial Governor $2^1 - 1$.
    \end{theorem}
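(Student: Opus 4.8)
The plan is to argue by contradiction, using the one-directional behaviour of the Governor index established in Lemma \ref{lemma1} and Theorem \ref{theorem1} together with an \emph{ancestor map} that records the Governor of each odd predecessor. Suppose the repeating cycle contains an odd integer $\mathcal{X} = \left\{\sum_{M>m} b_M 2^M\right\} + 2^m - 1$ whose Governor index satisfies $m > \mathcal{T} = 1$. Because the cycle is finite, every one of its elements is an odd ancestor of every other; in particular $\mathcal{X}$ is one of its own odd ancestors. On the other hand, running the sequence forward from $\mathcal{X}$, Lemma \ref{lemma1} and Corollary \ref{corollary:UniqueIndex} force a strictly decreasing run of Governor indices $m, m-1, \dots, 1$, during which the high part $\sum_{M>m} b_M 2^M$ is itself reshaped in the manner of Table \ref{tab:GeneralBehavior}; by Theorem \ref{theorem1} the index can be raised again only through the interrupted evolution of a Trivial Governor. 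Hence the recurrence of $\mathcal{X}$ must pass through such an interruption, so by Lemma \ref{lemma:steiner} and the lemma following it, $\mathcal{X}$ recurs precisely when the Trivial Governor $2^{\mathcal{T}}-1$ has regrown to the pattern $\left\{\sum_{M>m} b_M 2^M\right\} + 2^{m-1} + \cdots + 2^{\mathcal{T}} + (2^{\mathcal{T}}-1)$.

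The heart of the argument is the ancestor map itself. I would construct it by inverting the only two moves the dynamics allow: (i) the forced single-index descent of Lemma \ref{lemma1}, whose inverse raises the Governor index by exactly one and leaves a prescribed imprint on the high part; and (ii) the Trivial-Governor interruption of Theorem \ref{theorem1}, which by Corollary \ref{corollary:GovernorIndexShare} can act only when the current Governor is $2^{\mathcal{T}}-1$ and which, read backwards from Table \ref{tab:GeneralBehavior_TrivialCycle}, again yields an odd integer whose Governor is $2^{\mathcal{T}}-1$. Chaining these inverse moves, every odd ancestor of $\mathcal{X}$ either lies on a pure-descent branch strictly above $\mathcal{X}$ — in which case it has a strictly larger Governor index and cannot close a loop back to $\mathcal{X}$ — or is reached through at least one inverse interruption, after which, and hence ever after on a closed loop, its Governor is the Trivial Governor. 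Thus the only odd ancestors of $\mathcal{X}$ lying on the repeating cycle carry the Governor $2^{\mathcal{T}}-1 = 2^1-1$.

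Combining the two observations: $\mathcal{X}$ lies on the cycle and is one of its own odd ancestors, so by the ancestor map its Governor is $2^1 - 1$, contradicting $m > 1$. Since $\mathcal{X}$ was an arbitrary odd member of the cycle, every odd integer in the repeating cycle has Governor $2^1 - 1$.

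The step I expect to be the real obstacle is the inversion of the Trivial-Governor interruption in point (ii): one must show that no finite chain of inverse moves can simultaneously rebuild the Governor $2^m - 1$ \emph{and} the exact high part $\sum_{M>m} b_M 2^M$ of $\mathcal{X}$ unless $m = 1$. That requires tracking carries and collisions between the $2^M$ terms of the modified binary expansion through a whole descent-and-re-ascent, i.e. pinning down exactly how the high part produced at the end of the descent in Table \ref{tab:GeneralBehavior} can (or rather cannot) be consumed by the interruption dynamics of Table \ref{tab:GeneralBehavior_TrivialCycle} without leaving residual terms. Everything else is bookkeeping layered on the lemmas already proved.
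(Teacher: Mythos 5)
Your overall strategy --- build an ancestor map, use the fact that an odd integer in a cycle is its own odd ancestor, and conclude that its Governor must be the Trivial Governor --- parallels the paper's, but the decisive step is missing, and you have in fact pointed at it yourself. Your argument establishes only that the cycle must contain \emph{at least one} odd integer whose Governor is $2^{\mathcal{T}}-1$: a pure chain of inverse descents raises the Governor index monotonically and so cannot close up, hence some inverse interruption must occur somewhere on the loop. But your claim that after one inverse interruption the Governor is Trivial ``ever after on a closed loop'' does not follow from your own rules: by your rule (i) an inverse descent raises the Governor index by one, so ancestors lying further back than the interruption point can again acquire Governor indices $2, 3, \ldots$, and nothing you have said prevents $\mathcal{X}$ itself from being one of those intermediate, non-Trivial-Governor integers sitting on a descent leg of the cycle. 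The conclusion therefore does not yet reach \emph{every} odd member of the cycle.

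What closes this gap in the paper is a concrete local computation that you do not perform. Writing the chain of even ancestors of an odd integer with tail $2^1-1$ as ending in $2^1, 2^2, \ldots, 2^i$ (Table \ref{tab:3x+1ancestor}), the immediate odd ancestor with Governor $2^{\mu}-1$ must satisfy $(2^{\mu}-1)(2+1)+1 = 2^{\mu+1}+2^{\mu}-2$ equal to the tail $2^i$, or to $2^{i+1}+2^i$, or to $2^{i+2}+2^{i+1}+2^i$; of these only the one-term case has a solution, namely $\mu=1$ and $i=2$ (Table \ref{tab:OddAncestorTable_3Z}). This forces every odd ancestor, at every step backwards around the loop, to carry the Governor $2^1-1$, which is exactly the propagation your argument lacks. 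So the obstacle you flag in your final paragraph --- tracking how the low-order terms can be rebuilt through a descent-and-re-ascent --- is not a technical refinement to be supplied later; it is the entire content of the theorem as the paper proves it, and without it your proof does not go through.
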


    \begin{proof}

    \begin{table}[t]
    \centering
    \resizebox{\textwidth}{!}{
        \begin{NiceTabular}{cccccccccccccc}
                $\mathcal{X}$ & $2^6$ & + & $2^5$ & + & $2^4$ & + & $2^3$ & + & $2^2$ & + & $2^1$ & + & \ul{$2^1 - 1$} \\
                Even ancestor & $2^7$ & + & $2^6$ & + & $2^5$ & + & $2^4$ & + & $2^3$ & + & $2^2$ & + & \ul{$2^1$}  \\
                Even ancestor & $2^8$ & + & $2^7$ & + & $2^6$ & + & $2^5$ & + & $2^4$ & + & $2^3$ & + &  \ul{$2^2$}  \\
                \vdots &&&&&&&&&&&&&\\
                Even ancestor & $2^{i + 6}$ & + & $2^{i + 5}$ & + & $2^ {i + 4}$ & + & $2^{i + 3}$ & + & $2^{i + 2}$ & + & $2^{i + 1}$ & + &  \ul{$2^i$}  \\
                Odd ancestor &  & $2^{i + 5}$ &  & + &  & $2^{i + 3}$ &  & + &  & $2^{i + 1}$ &  & + &  \ul{$2^{\mu} - 1$}  \\
    \CodeAfter
        \begin{tikzpicture} [shorten < = 1mm, shorten > = 1mm, blue, <-]
            \draw (1-2) -- (2-2);
            \draw (1-4) -- (2-4);
            \draw (1-6) -- (2-6);
            \draw (1-8) -- (2-8);
            \draw (1-10) -- (2-10);
            \draw (1-12) -- (2-12);
            \draw (1-14) -- (2-14);
            \draw (2-2) -- (3-2);
            \draw (2-4) -- (3-4);
            \draw (2-6) -- (3-6);
            \draw (2-8) -- (3-8);
            \draw (2-10) -- (3-10);
            \draw (2-12) -- (3-12);
            \draw (2-14) -- (3-14);
            \draw (3-2) -- (5-2);
            \draw (3-4) -- (5-4);
            \draw (3-6) -- (5-6);
            \draw (3-8) -- (5-8);
            \draw (3-10) -- (5-10);
            \draw (3-12) -- (5-12);
            \draw (3-14) -- (5-14);
            \draw (5-2) -- (6-3);
            \draw (5-4) -- (6-3);
            \draw (5-6) -- (6-7);
            \draw (5-8) -- (6-7);
            \draw (5-10) -- (6-11);
            \draw (5-12) -- (6-11);
            \draw (5-14) -- (6-14);
        \end{tikzpicture}
    \end{NiceTabular}}
    \caption{Ancestor map of $\mathcal{X}$ upon reappearance for the Collatz-type 3$\mathcal{Z}$+1 sequence. Only a few indices are shown.}
    \label{tab:3x+1ancestor} 
    \end{table}

To trace the ancestors of the integer $\mathcal{X}$ upon its reappearance, the generation map is constructed. The ancestors are listed in Table \ref{tab:3x+1ancestor}.

Given that there can be a maximum of three terms in the product $(2^{\mu} - 1)(2 + 1) + 1$, the conditions for existence of an odd ancestor are tabulated in Table \ref{tab:OddAncestorTable_3Z}.

    \begin{table}[t]
        \centering
        \resizebox{\textwidth}{!}{
        \begin{tabular}{|c|c|c|}
            \hline
            Three terms & Two Terms & One term  \\
            \hline
            $\begin{aligned}
            (2^{\mu} - 1)(2 + 1) + 1    &=  2^{i +  2} + 2^{i + 1} + 2^i  \\
            2^{\mu} - 2^{i + 1}         &=  \frac{2^i + 2}{3} 
            \end{aligned}$
    & 
            $\begin{aligned}
            (2^{\mu} - 1)(2 + 1) + 1    &= 2^{i + 1} + 2^i  \\
            2^{\mu} - 2^i               &=  \frac{2}{3}
            \end{aligned}$
    &
            $\begin{aligned}
            (2^{\mu} - 1)(2 + 1) + 1    &= 2^i \\
            2^{\mu + 1} + 2^{\mu}       &= 2^i + 2
            \end{aligned}$
            \\
            \hline
            No solution & No solution & $\mu =1$ and $i = 2$    \\
            \hline
        \end{tabular}}
        \caption{Conditions for an odd ancestor to exist. The solution depicted in column 3 is shown with blue arrows in Table \ref{tab:3x+1ancestor}}
        \label{tab:OddAncestorTable_3Z}
    \end{table}

The odd ancestor according to the solution obtained in Table \ref{tab:OddAncestorTable_3Z} and depicted with blue arrows in Table \ref{tab:3x+1ancestor} is:

    \begin{align*}
        \text{Odd ancestor} = \cdots + 2^7 + 2^5 + 2^3 + 2^1 - 1
    \end{align*}

From the structure of this odd ancestor, it is evident that the Governor of the odd ancestor is the Trivial Governor.
Similarly, all ancestors end in the Trivial Governor.
Therefore, unless the original Governor of $\mathcal{X}$ is also the Trivial Governor, it cannot be part of the repeating cycle.
Additionally, the solution $i = 2$ suggests that two even integers are present between the odd integers in a repeating cycle.

    \end{proof}


    \subsection{Successor Map}

    \begin{theorem}[$3\mathcal{Z} + 1$]
        There exists no auxiliary cycle.
    \end{theorem}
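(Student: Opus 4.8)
The strategy is to obtain the statement from the Ancestor Map (Theorem \ref{Theorem:AncestorMap_3z}) together with a Successor Map run forwards, built in the same style as Tables \ref{tab:GeneralBehavior_TrivialCycle} and \ref{tab:3x+1ancestor}. By Theorem \ref{Theorem:AncestorMap_3z} every odd integer occurring in a repeating cycle has Governor $2^1-1$; equivalently its two lowest binary digits are $01$, so it is congruent to $1$ modulo $4$. It therefore suffices to prove that the $3\mathcal{Z}+1$ map has no cycle, other than $1\to 4\to 2\to 1$, all of whose odd members are $\equiv 1\pmod 4$.

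First I would compute the successor of a generic Trivial-Governor odd integer $\mathcal{X}=\left\{\sum_{M>P}b_M2^M\right\}+2^P+2^1-1$, where $2^P$ with $P\ge 2$ is the next term above the Governor and where $\mathcal{X}=1$ is precisely the case with no such term. As in the $3\mathcal{Z}+1$ column of Table \ref{tab:GeneralBehavior_TrivialCycle}, the odd step produces $3\mathcal{X}+1$, which is divisible by $2^2$ because $\mathcal{X}\equiv 1\pmod 4$, so it is followed by at least two even steps; hence the next odd integer $\mathcal{X}'$ of the sequence satisfies $\mathcal{X}'\le \frac{3\mathcal{X}+1}{4}$. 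Since $3\mathcal{X}+1<4\mathcal{X}$ whenever $\mathcal{X}>1$, this gives $\mathcal{X}'<\mathcal{X}$ for every Trivial-Governor odd integer $\mathcal{X}\neq 1$. The Successor Map also records which Governor class $\mathcal{X}'$ falls into: once the body term $2^P$ has descended, step by step as displayed in Table \ref{tab:GeneralBehavior_TrivialCycle}, down to the Governor index, it merges with $2^1-1$ and lifts the Governor to some $2^j-1$ with $j\ge 2$. This is the precise sense in which, for $3\mathcal{Z}+1$, the Trivial Governor is always eventually carried to a different-index Governor.

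To finish, suppose an auxiliary cycle existed and let $\mathcal{X}$ be its smallest odd member (a cycle cannot consist of even integers only). By Theorem \ref{Theorem:AncestorMap_3z} its Governor is $2^1-1$, so $\mathcal{X}\equiv 1\pmod 4$. If $\mathcal{X}>1$, the previous paragraph furnishes an odd integer $\mathcal{X}'<\mathcal{X}$ belonging to the same cycle, contradicting minimality. Hence $\mathcal{X}=1$; but the forward orbit of $1$ is $1\to 4\to 2\to 1$, the trivial cycle, not an auxiliary one. This contradiction establishes the theorem.

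Given Theorem \ref{Theorem:AncestorMap_3z}, this deduction is short and I do not anticipate a genuine obstacle; the substantive work was already spent on the Ancestor Map. The only care needed is bookkeeping: when $2^P$ comes within one or two indices of the Governor, the carries in $3\mathcal{X}+1$ alter the exact binary shape of $\mathcal{X}'$ (as already visible in collapses such as $2^2+1\mapsto 1$), so those boundary configurations must be checked individually, both to preserve the strict inequality $\mathcal{X}'<\mathcal{X}$ and to confirm that the body cannot be re-inserted so as to reproduce the same odd integer. That last point is exactly what separates $3\mathcal{Z}+1$, with its unique Trivial Governor $2^1-1$, from $5\mathcal{Z}+1$, whose second Trivial Governor $2^2-1$ lets the body slot back in without forcing a decrease and hence permits auxiliary cycles among small integers.
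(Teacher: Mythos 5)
Your proposal is correct and takes essentially the same route as the paper: both rest entirely on Theorem \ref{Theorem:AncestorMap_3z}, which forces every odd member of a repeating cycle to have Governor $2^1-1$ (equivalently $\mathcal{X}\equiv 1\pmod 4$), and then observe that the odd step applied to such an integer is followed by at least two even steps, so that a strictly larger odd successor is impossible --- the paper phrases this as ``$\mathcal{X}_1=\mathcal{OE}(\mathcal{X})$ odd forces $P=1$ and hence a non-trivial Governor,'' while you phrase the contrapositive as a descent $\mathcal{X}'\le\frac{3\mathcal{X}+1}{4}<\mathcal{X}$ from the minimal odd member. The two framings are the same deduction, with your minimality argument being the slightly cleaner write-up.
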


    \begin{proof}
    
Assume there is an auxiliary cycle with at least two distinct odd integers $\mathcal{X}$ and $\mathcal{X}_1$ where $\mathcal{X}_1 > \mathcal{X}$.
Additionally, let $\mathcal{X}$ end in $2^P + 2^1 - 1$.
The relation between $\mathcal{X}$ and $\mathcal{X}_1$ is given by:

    \begin{align*}
        \mathcal{X}_1   &=  \mathcal{OE}(\mathcal{X}) \\
                        &=  \mathcal{OE}(2^P + 2^1 - 1) \\
                        &=  2^{P - 1} + 2^1
    \end{align*}

For $\mathcal{X}_1$ to be odd, $P$ must be 1.
Substituting $P = 1$ into the expression of $\mathcal{X}$, the Governor of $\mathcal{X}$ changes to $2^2 - 1$.
Therefore, $\mathcal{X}$ cannot be part of the repeating cycle, according to Theorem \ref{Theorem:AncestorMap_3z}.

Thus, no auxiliary cycle exist.
        
    \end{proof}


    \section{Generation mapping for $5\mathcal{Z} + 1$ sequence}


    \subsection{Ancestor Map}

    \begin{theorem}[$5\mathcal{Z} + 1$]
    \label{Theorem:AncestorMap_5z}
        All the odd integers in the repeating cycle either have the Trivial Governor $2^2 - 1$ or $2^1 - 1$.
    \end{theorem}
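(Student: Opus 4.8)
The plan is to mirror the ancestor-map argument already used for the $3\mathcal{Z}+1$ sequence in Theorem \ref{Theorem:AncestorMap_3z}. By the preceding lemmas, if an odd integer $\mathcal{X}$ with Governor index $m>\mathcal{T}$ lies on a repeating cycle it must reappear in the expanded form $\left\{\sum_{M>m} b_M 2^M\right\} + 2^{m-1} + \cdots + 2^{\mathcal{M}+1} + 2^{\mathcal{M}} + (2^{\mathcal{M}}-1)$ with $2^{\mathcal{M}}-1$ a Trivial Governor, and for the $5\mathcal{Z}+1$ sequence the only candidates are $\mathcal{M}=2$ and $\mathcal{M}=1$. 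What remains is to show that tracing the full ancestry of $\mathcal{X}$ at its reappearance is consistent only with those two values, so that \emph{every} odd integer occurring in the cycle — each of which is itself an odd ancestor of the next — carries Governor $2^2-1$ or $2^1-1$.

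First I would build the $5\mathcal{Z}+1$ analogue of Table \ref{tab:3x+1ancestor}: starting from the reappeared $\mathcal{X}$, the even ancestors are obtained by repeated doubling, so after $i$ doublings the low block is the run of consecutive $1$'s occupying bit positions $i$ through $i+m-1$ (together with the shifted high bits $b_M$). The first odd ancestor $A$ is the one for which $5A+1$ equals one of these doubled integers, i.e. $A=(2^{i}\mathcal{X}-1)/5$ must be a positive \emph{odd} integer. Writing the Governor of such an $A$ as $2^{\mu}-1$ and using $5=2^{2}+1$, one has $(2^{\mu}-1)(2^{2}+1)+1 = 2^{\mu+2}+2^{\mu}-2^{2}$, while the effect of the odd step on a remaining bit $2^{k}$ of $A$ is $5\cdot 2^{k}=2^{k+2}+2^{k}$, a ``gapped domino''. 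Matching these contributions against the required block of consecutive $1$'s at the bottom of $2^{i}\mathcal{X}$ gives, exactly as in Table \ref{tab:OddAncestorTable_3Z}, a short list of cases according to how many bit positions the quantity $2^{\mu+2}+2^{\mu}-2^{2}$ overlaps the low block, and in each case one is left with an elementary Diophantine relation in $\mu$ and $i$.

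The heart of the proof is to solve that finite family of relations and check the odd-integrality forced by the division by $5$. I expect the outcome to be that the only admissible solutions are $\mu=2$, which needs no carries at all since $(2^{2}-1)(2^{2}+1)+1=2^{4}$, and $\mu=1$, where $(2^{1}-1)(2^{2}+1)+1=2^{2}+2^{1}$; every larger $\mu$ is excluded because $2^{\mu+2}+2^{\mu}-2^{2}$ then has the wrong bit pattern to fit a run of $1$'s, or because the associated $A$ fails the $\bmod\ 5$ or parity test. Hence the Governor of every odd ancestor — and therefore of every odd integer on a repeating cycle — is $2^2-1$ or $2^1-1$, and the solved value(s) of $i$ record how many even steps separate consecutive odd integers of the cycle.

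The main obstacle, relative to the $3\mathcal{Z}+1$ case, is the combinatorics of the case split. Because $5$ in binary is $101$, the odd step produces gapped rather than adjacent pairs of bits, so the overlap analysis has more sub-cases (further refined, as in Table \ref{tab:GeneralBehavior_TrivialCycle}, by the parity of $m$); and because there are two Trivial Governors the reappeared form, and hence the ancestor map, branches on $\mathcal{M}\in\{1,2\}$. The delicate point is to make the enumeration of overlap cases genuinely exhaustive and to confirm that no large-$\mu$ solution survives the divisibility and parity constraints — this is precisely where the argument would fail if a case were overlooked.
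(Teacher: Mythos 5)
Your proposal follows essentially the same route as the paper: construct the ancestor map by repeated doubling, then match $(2^{\mu}-1)(2^{2}+1)+1=2^{\mu+2}+2^{\mu}-2^{2}$ against a block of one, two, or three consecutive powers of $2$, which yields exactly the two solutions the paper finds ($\mu=2$ with $i=4$, and $\mu=1$ with $i=1$), hence Governors $2^{2}-1$ or $2^{1}-1$ for every odd ancestor. The extra checks you flag (divisibility by $5$, parity, and the branch on $\mathcal{M}\in\{1,2\}$) are refinements the paper largely leaves implicit, but they do not change the argument's structure.
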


    \begin{proof}
    
The odd integer $\mathcal{X}$ upon reappearance can end in either $2^2 + 2^2 - 1$ or $2^1  + 2^1 - 1$ according to Theorem \ref{theorem1}.
The ancestor map for $\mathcal{X}$ ending in $2^1 + 2^1 - 1$ is shown in Table \ref{tab:5x+1ancestor}.

    \begin{table}[t]
    \centering
    \resizebox{\textwidth}{!}{
        \begin{NiceTabular}{cccccccccccccc}
                $\mathcal{X}$ & $2^6$ & + & $2^5$ & + & $2^4$ & + & $2^3$ & + & $2^2$ & + & $2^1$ & + & \ul{$2^1 - 1$} \\
                Even ancestor & $2^7$ & + & $2^6$ & + & $2^5$ & + & $2^4$ & + & $2^3$ & + & $2^2$ & + & \ul{$2^1$}  \\
                Even ancestor & $2^8$ & + & $2^7$ & + & $2^6$ & + & $2^5$ & + & $2^4$ & + & $2^3$ & + &  \ul{$2^2$}  \\
                \vdots &&&&&&&&&&&&&\\
                Even ancestor & $2^{i + 6}$ & + & $2^{i + 5}$ & + & $2^ {i + 4}$ & + & $2^{i + 3}$ & + & $2^{i + 2}$ & + & $2^{i + 1}$ & + &  \ul{$2^i$}  \\
                Odd ancestor & $2^{i + 6}$ & + & $2^{i + 5}$ &&&  + &&& $2^{i + 2}$  & + & $2^{i + 1}$ & + &  \ul{$2^{\mu} - 1$}  \\
    \CodeAfter
        \begin{tikzpicture} [shorten < = 1mm, shorten > = 1mm, blue, <-]
            \draw (1-2) -- (2-2);
            \draw (1-4) -- (2-4);
            \draw (1-6) -- (2-6);
            \draw (1-8) -- (2-8);
            \draw (1-10) -- (2-10);
            \draw (1-12) -- (2-12);
            \draw (1-14) -- (2-14);
            \draw (2-2) -- (3-2);
            \draw (2-4) -- (3-4);
            \draw (2-6) -- (3-6);
            \draw (2-8) -- (3-8);
            \draw (2-10) -- (3-10);
            \draw (2-12) -- (3-12);
            \draw (2-14) -- (3-14);
            \draw (3-2) -- (5-2);
            \draw (3-4) -- (5-4);
            \draw (3-6) -- (5-6);
            \draw (3-8) -- (5-8);
            \draw (3-10) -- (5-10);
            \draw (3-12) -- (5-12);
            \draw (3-14) -- (5-14);
            \draw (5-2) -- (6-2);
            \draw (5-4) -- (6-4);
            \draw (5-6) -- (6-10);
            \draw (5-10) -- (6-10);
            \draw (5-8) -- (6-12);
            \draw (5-12) -- (6-12);
            \draw (5-14) -- (6-14);
        \end{tikzpicture}
    \end{NiceTabular}}
    \caption{Ancestor map of $\mathcal{X}$ upon reappearance for the Collatz-type 5$\mathcal{Z}$+1 sequence. Only a few indices are shown.}
    \label{tab:5x+1ancestor} 
    \end{table}

Given that there can be a maximum of three terms in the product $(2^{\mu} - 1)(2^2 + 1) + 1$, the conditions for existence of an odd ancestor are tabulated in Table \ref{tab:OddAncestorTable_5Z}.

    \begin{table}[t]
        \centering
        \resizebox{\textwidth}{!}{
        \begin{tabular}{|c|c|c|}
            \hline
            Three terms & Two Terms & One term  \\
            \hline
            $\begin{aligned}
            (2^{\mu} - 1)(2^2 + 1) + 1      &=  2^{i +  2} + 2^{i + 1} + 2^i  \\
            2^{\mu + 2} + 2^{\mu} - 2^2     &=  2^{i +  2} + 2^{i + 1} + 2^i
            \end{aligned}$
    & 
            $\begin{aligned}
            (2^{\mu} - 1)(2^2 + 1) + 1    &= 2^{i + 1} + 2^i  \\
            2^{\mu + 2} + 2^{\mu} - 2^2   &= 2^{i + 1} + 2^i    \\
            
            \end{aligned}$
    &
            $\begin{aligned}
            (2^{\mu} - 1)(2^2 + 1) + 1    &= 2^i \\
            2^{\mu + 2} + 2^{\mu} - 2^2   &= 2^i
            \end{aligned}$
            \\
            \hline
            No solution & $\mu = 1$ and $i = 1$ & $\mu = 2$ and $i = 4$    \\
            \hline
        \end{tabular}}
        \caption{Conditions for an odd ancestor to exist.  The solution depicted in column 3 is shown with blue arrows in Table \ref{tab:5x+1ancestor}.}
        \label{tab:OddAncestorTable_5Z}
    \end{table}

One of the odd ancestor is depicted with blue arrows in Table \ref{tab:5x+1ancestor}.
The odd ancestors according to the solution obtained in Table \ref{tab:OddAncestorTable_5Z} are:

    \begin{align*}
        \text{Odd ancestor} &= \cdots + 2^6 + 2^3 + 2^2 + 2^1 - 1    \\
        &\text{Or}\\
        \text{Odd ancestor} &=  \cdots + 2^9 + 2^7 + 2^5 + 2^2 - 1
    \end{align*}

It is seen that all the odd integers either have the Trivial Governor $2^2 -1$ or $2^1 - 1$.
Therefore, unless the original Governor of $\mathcal{X}$ is a Trivial Governor of $5\mathcal{Z} + 1$, it cannot be part of the repeating cycle.

    \end{proof}


    \subsection{Successor Map}


    \begin{lemma}[$5\mathcal{Z} + 1$]
        The smallest odd integer that is part of a repeating cycle has the Trivial Governor \(2^1 - 1\).
    \end{lemma}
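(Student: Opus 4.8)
The plan is to combine Theorem~\ref{Theorem:AncestorMap_5z} with an infinite-descent (minimality) argument. That theorem already forces the Governor of \emph{every} odd integer in a repeating cycle to be $2^2-1$ or $2^1-1$, so it is enough to rule out the case where the smallest odd integer $\mathcal{X}$ of the cycle has Governor $2^2-1$. First I would rewrite this hypothesis arithmetically: having Governor $2^2-1$ means the trailing block of ones of $\mathcal{X}$ has length exactly $2$, that is, $\mathcal{X}\equiv 3 \pmod 8$, so $\mathcal{X}=8j+3$ for some integer $j\ge 0$.

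Next I would run one step of the $5\mathcal{Z}+1$ dynamics. The odd step gives $5\mathcal{X}+1=40j+16=8(5j+2)$, so it is followed by at least three even steps (cf.\ the ``$m$ even'' column of Table~\ref{tab:GeneralBehavior_TrivialCycle} with $m=2$), and therefore the next odd integer $\mathcal{X}'$ of the cycle satisfies
\[
\mathcal{X}' \;\le\; \frac{5\mathcal{X}+1}{8} \;=\; \frac{40j+16}{8} \;=\; 5j+2 \;<\; 8j+3 \;=\; \mathcal{X}.
\]
Since a repeating cycle is closed under the map, $\mathcal{X}'$ is again an odd integer lying in that cycle, and $\mathcal{X}'<\mathcal{X}$ contradicts the choice of $\mathcal{X}$ as the smallest odd integer in it. Hence the Governor of $\mathcal{X}$ cannot be $2^2-1$, and by Theorem~\ref{Theorem:AncestorMap_5z} it must be the Trivial Governor $2^1-1$.

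I do not anticipate a genuine obstacle; the only points needing care are that $8 \mid (5\mathcal{X}+1)$ whenever $\mathcal{X}\equiv 3\pmod 8$ (immediate from $5\mathcal{X}+1=8(5j+2)$) and that every cycle does contain an odd integer (a cycle of even numbers would strictly decrease under halving), so that ``the smallest odd integer in the cycle'' is well defined and the descent genuinely yields a contradiction. As a sanity check, in the trivial cycle the odd integer $3$ indeed has Governor $2^2-1$, yet it is not the minimal odd element: its successor $1$ has Governor $2^1-1$, exactly as the lemma claims. Note that the argument never uses the Governor of $\mathcal{X}'$, only the strict inequality $\mathcal{X}'<\mathcal{X}$.
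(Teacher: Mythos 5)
Your proof is correct, and it reaches the conclusion by a related but noticeably cleaner route than the paper. The paper's proof restricts attention to a cycle with two odd integers, writes the minimal element in the special two-term form $2^P+2^2-1$, computes $\mathcal{OE}(\mathcal{X})$ and $\mathcal{OEE}(\mathcal{X})$ symbolically, and argues that minimality permits at most two even steps, which forces $P=1$ (whence the Governor is really $2^1-1$) or $P=2$ (whence the Governor is $2^3-1$, excluded by Theorem \ref{Theorem:AncestorMap_5z}). You run the same underlying mechanism --- the link between the Governor index and the $2$-adic valuation of $5\mathcal{X}+1$ --- in the opposite direction: Governor $2^2-1$ means $\mathcal{X}\equiv 3\pmod 8$, hence $8\mid 5\mathcal{X}+1$, hence at least three halvings, hence the next odd element satisfies $\mathcal{X}'\le(5\mathcal{X}+1)/8<\mathcal{X}$, contradicting minimality outright. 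What your version buys is generality and rigor: it applies to a cycle with any number of odd elements and to any $\mathcal{X}$ with Governor $2^2-1$ (not just the two-term form $2^P+2^2-1$), it avoids the paper's case split on $P$ and its somewhat loose symbolic expressions for $\mathcal{OE}(\mathcal{X})$, and it needs Theorem \ref{Theorem:AncestorMap_5z} only once, at the outset, to reduce to the two Trivial Governors. The paper's version, by contrast, stays within the modified-binary bookkeeping used throughout the rest of the text, which is its only real advantage.
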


    \begin{proof}
    
Consider a repeating cycle with two odd integers, $\mathcal{X}$ and $\mathcal{X}_1$, where $\mathcal{X}_1 > \mathcal{X}$.
Suppose $\mathcal{X}$ ends with the terms $2^P + 2^2 - 1$.
Since $\mathcal{X}$ is smaller than $\mathcal{X}_1$, the odd step is followed by at most two even steps.

    \begin{align*}
        \mathcal{X}                     &= 2^P + 2^2 - 1 \\
        \mathcal{OE}(\mathcal{X})       &= 2^{P - 1} + 2^3 \\
        \mathcal{OEE}(\mathcal{X})      &= 2^{P - 2} + 2^2
    \end{align*}

If $\mathcal{X}_1 = \mathcal{OE}(\mathcal{X})$, then $P = 1$, and the Trivial Governor of $\mathcal{X}$ is $2^1 - 1$.
If $\mathcal{X}_1 = \mathcal{OEE}(\mathcal{X})$, then $P = 2$, and the Governor of $\mathcal{X}$ is $2^3 - 1$.
According to Theorem \ref{Theorem:AncestorMap_5z}, this does not form a repeating cycle.

Thus, if $\mathcal{X}$ is the smallest odd integer in a repeating cycle, it must have the Trivial Governor $2^1 - 1$.

\end{proof}


    \begin{theorem}[$5\mathcal{Z} + 1$]
        The smallest odd integers that form an auxiliary cycle are smaller than $2^5$.
    \end{theorem}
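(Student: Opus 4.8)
The plan is to argue by contradiction: assume an auxiliary cycle whose smallest odd integer \(\mathcal{X}\) satisfies \(\mathcal{X}\ge 2^5\) and derive a contradiction from the dynamics already recorded. By the preceding Lemma, \(\mathcal{X}\) carries the Trivial Governor \(2^1-1\), so I would write \(\mathcal{X}=\left\{\sum_{M>P}b_M2^M\right\}+2^P+2^1-1\) with Governor-adjacent index \(P\ge 2\) (no higher term would force \(\mathcal{X}=1\), the trivial cycle). First I would extract the two elementary consequences of minimality: since \(\lfloor(5\mathcal{X}+1)/8\rfloor<\mathcal{X}\), the odd step at \(\mathcal{X}\) is followed by at most two even steps, and a direct \(2\)-adic look at \(5\mathcal{X}+1\) (every summand divisible by \(2^2\) except the lone \(2^1\)) shows \(v_2(5\mathcal{X}+1)=1\); hence exactly one even step occurs and the next odd integer is \(\mathcal{X}_1=\tfrac12(5\mathcal{X}+1)>\mathcal{X}\), which is precisely the \(m\) odd branch of Table~\ref{tab:GeneralBehavior_TrivialCycle}. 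Symmetrically, the even-run immediately before the cycle returns to \(\mathcal{X}\) has length \(\ge 3\), because \(\mathcal{Y}\ge\mathcal{X}\) for the odd integer \(\mathcal{Y}\) preceding that run forces \(2^{e}>5\).

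Next I would read the Governor of \(\mathcal{X}_1\) off its low-order bits: \(\mathcal{X}_1\equiv 3\pmod 4\) when \(P\ge 4\) (Governor \(2^2-1\)), \(\mathcal{X}_1\equiv 1\pmod 4\) when \(P=2\) (Governor \(2^1-1\)), and when \(P=3\) the trailing block of \(\mathcal{X}_1\) is \(2^3-1\) or longer, giving Governor index \(\ge 3\) — impossible by Theorem~\ref{Theorem:AncestorMap_5z}, so \(P\ne 3\). In parallel I would set up the cycle identity in the paper's successor-map style: listing the distinct odd integers \(\mathcal{X}=\mathcal{Y}_0\to\mathcal{Y}_1\to\cdots\to\mathcal{Y}_{j-1}\to\mathcal{Y}_0\) with \(e_i\) even steps after \(\mathcal{Y}_i\) (so \(e_0=1\)) and \(E=\sum_i e_i\), repeated use of \(\mathcal{Y}_{i+1}=(5\mathcal{Y}_i+1)/2^{e_i}\) gives \(\mathcal{X}\,(2^E-5^j)=\sum_{i=0}^{j-1}5^{\,j-1-i}2^{\sigma_i}\) with \(\sigma_i=e_0+\cdots+e_{i-1}\), subject to \(\mathcal{Y}_i\ge\mathcal{X}\) for all \(i\). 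For small \(j\) this is decisive by hand: \(j=1\) has no solution, \(j=2\) forces \(\mathcal{X}=1\) (trivial), and for \(j=3\), substituting \(e_0=1\) into \(\mathcal{X}=(35+2^{1+e_1})/(2^{1+e_1+e_2}-125)\) and discarding parameters that violate \(\mathcal{Y}_1,\mathcal{Y}_2\ge\mathcal{X}\), the only admissible values are \(\mathcal{X}=13\) and \(\mathcal{X}=17\), both below \(2^5\); for \(j=4\) and \(j=5\) the denominator \(2^E-5^j\) is already large enough that the same identity pins \(\mathcal{X}\) well below \(2^5\).

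The remaining and genuinely hard part is to close the tail of long cycles. Here I would feed \(\mathcal{Y}_i\ge\mathcal{X}\ge 2^5\) back into the identity: it yields \(2^{\sigma_i}\le 5^{i}+N_i/\mathcal{X}\) with \(N_i=\sum_{k<i}5^{\,i-1-k}2^{\sigma_k}\), so by induction every \(\sigma_i\) stays within a controlled additive error of \(i\log_2 5\); this caps the numerator by roughly \(j\,5^{\,j-1}\) and forces \(2^E-5^j\), equivalently \(\|j\log_2 5\|\), to be below a fixed polynomial in \(j\) times \(5^{-j}\). A lower bound for \(|E\log 2-j\log 5|\) of the shape \(c/j^{\kappa}\) — the continued-fraction / linear-forms-in-logarithms estimate underlying Steiner's theorem~\cite{steiner1977theorem} — then contradicts \(\mathcal{X}\ge 2^5\) once \(j\) exceeds an explicit threshold, the cases below the threshold being cleared by the finite tabulation above. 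The main obstacle, then, is exactly this conversion of the dynamical, bit-level bookkeeping into a quantitative Diophantine inequality uniform in the cycle length; everything preceding it is elementary manipulation of the modified binary forms through Tables~\ref{tab:GeneralBehavior} and~\ref{tab:GeneralBehavior_TrivialCycle}.
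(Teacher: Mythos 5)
Your overall strategy---translating the assumed cycle into the exponential Diophantine identity $\mathcal{X}\,(2^E-5^j)=\sum_{i=0}^{j-1}5^{\,j-1-i}2^{\sigma_i}$ and then splitting into small $j$ (finite check) versus large $j$ (a lower bound on $|E\log 2-j\log 5|$)---is a genuinely different route from the paper's. The paper never writes a cycle equation: it fixes the low-order bits of the smallest odd integer as $2^P+2^Q+2^R+2^1-1$ (the Governor $2^1-1$ being forced by the preceding lemma, as you also use), iterates the map symbolically on those bits in Table \ref{tab:SuccessorMap_5z}, and eliminates the cases $R=3$, $Q=5$, $Q=6$ by showing the resulting Governor violates Theorem \ref{Theorem:AncestorMap_5z} or the minimality of $\mathcal{X}$; the surviving branches close up only at $13$ and $17$, both below $2^5$. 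Your first two paragraphs (the $2$-adic valuation argument giving $e_0=1$, the bound $e\ge 3$ on the even run returning to $\mathcal{X}$, the elimination of $P=3$) are correct and amount to the same bit-level bookkeeping the paper performs, phrased through valuations rather than tables.

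The genuine gap is the one you name yourself: the large-$j$ half of the argument is not a proof. You assert that feeding $\mathcal{Y}_i\ge\mathcal{X}\ge 2^5$ into the cycle identity forces $\|j\log_2 5\|$ below a fixed polynomial in $j$ times $5^{-j}$, and that a continued-fraction or linear-forms-in-logarithms lower bound of the shape $c/j^{\kappa}$ then yields a contradiction past an explicit threshold---but neither the upper bound, nor the lower bound, nor the threshold, nor the finite verification below the threshold is actually produced. This is not a routine omission: obtaining an effective lower bound for $|E\log 2-j\log 5|$ strong enough to beat $5^{-j}$ uniformly in $j$ is precisely the hard analytic content of Steiner-type theorems \cite{steiner1977theorem}, and without it your argument establishes the statement only for the handful of cycle lengths $j\le 5$ that you tabulate by hand. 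As written, the proposal defers the entire tail of long cycles to an unproved Diophantine estimate and therefore does not constitute a proof. (For what it is worth, the paper's own successor map also stops short of a complete verification---it concedes that the $R=2$ and $R=4$ branches ``may continue indefinitely''---so both routes leave a tail uncontrolled; yours at least makes explicit what would be needed to close it.)
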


    \begin{proof}

    \begin{sidewaystable}
        \renewcommand\arraystretch{3}
        \resizebox{\textwidth}{!}{%
        \begin{tabular}{|c|c|c|c|}
            \hline
            \multicolumn{4}{|c|}{$\mathcal{X} = 2^R + 2^1 - 1 $}  \\
            \multicolumn{4}{|c|}{$\mathcal{OEO}(\mathcal{X}) = 2^{R - 1} + 2^4$} \\
            \multicolumn{4}{|c|}{$R$ must be 2, 3 or 4 to make sure $\mathcal{X}$ is the smallest odd integer.} \\
            \hline
            $R = 2$ & $R = 3$ & \multicolumn{2}{|c|}{$R = 4$}   \\           
            $\begin{aligned}
                \mathcal{X}                           &= 2^Q + 2^2 + 2^1 - 1   \\
                \mathcal{OE}(\mathcal{X})             &= 2^{Q - 1} + 2^3 + 2^2 + 2^1 - 1 \\
                \mathcal{OEOE}(\mathcal{X})            &= 2^{Q - 2} + 2^5 + 2^1 - 1  \\
                \mathcal{OEOEOE}(\mathcal{X})         &= 2^{Q - 3} + 2^6 + 2^4 + 2^2 - 1 \\
                \mathcal{OEOEOEOE}^{(5)}(\mathcal{X}) &= 2^{Q - 8} + 2^3 + 2^2 + 2^1 - 1
            \end{aligned}$
            &
            $\begin{aligned}
                \mathcal{X}               &= 2^Q + 2^3 + 2^1 - 1   \\
                \mathcal{OE}(\mathcal{X}) &= 2^{Q - 1} + 2^4 + 2^3 - 1
            \end{aligned}$
            &
            \multicolumn{2}{|c|}{
            $\begin{aligned}
                \mathcal{X}                     &= 2^Q + 2^4 + 2^1 - 1 \\
                \mathcal{OE}(\mathcal{X})       &= 2^{Q - 1} + 2^5 + 2^3 + 2^2 - 1 \\
                \mathcal{OEOEEE}(\mathcal{X})   &= 2^{Q - 4} + 2^4 + 2^3 + 2^2 - 1 \\
                \mathcal{OEOEEEO}(\mathcal{X})  &= 2^{Q - 4} + 2^7 + 2^3
            \end{aligned}$}
            \\
            $\mathcal{OEOEOEOE}^{(5)}(\mathcal{X})$ and $\mathcal{OE}(\mathcal{X})$ are identical if $2^Q$ is disregarded & If $Q = 3$ or $Q = 2$, it violates $Q > R$. Otherwise, it violates Theorem \ref{Theorem:AncestorMap_5z} & \multicolumn{2}{|c|}{$\mathcal{X}$ and $\mathcal{OE}^{(3)}\mathcal{OE}^{(3)}(\mathcal{X})$ are identical if $2^Q$ is disregarded. Otherwise, $Q$ must be 5 or 6.}
            \\
            \hline
            && $Q = 5$ & $Q = 6$    \\
            &&
            $\begin{aligned}
                \mathcal{X}                 &= 2^P + 2^5 + 2^4 + 2^1 - 1 \\
                \mathcal{OE}(\mathcal{X})   &= 2^{P - 1} + 2^6 + 2^5 + 2^4 + 2^3 + 2^2 - 1 \\
                \mathcal{OEOE}(\mathcal{X}) &= 2^{P - 2} + 2^9 + 2^5 + 2^4 - 1
            \end{aligned}$
            &
            $\begin{aligned}
                \mathcal{X}                 &= 2^P + 2^6 + 2^4 + 2^1 - 1 \\
                \mathcal{OE}(\mathcal{X})   &= 2^{P - 1} + 2^7 + 2^6 + 2^3 + 2^2 - 1 \\
                \mathcal{OEOEEE}(\mathcal{X}) &= 2^{P - 4} + 2^6 + 2^5 - 1
            \end{aligned}$
            \\
            && If $P = 3$ or $P = 4$, it violates $P > Q$. Otherwise, it violates Theorem \ref{Theorem:AncestorMap_5z} & If $P = 5$ or $P = 6$, it violates $P > Q$. Otherwise, it violates Theorem \ref{Theorem:AncestorMap_5z} \\
            \hline
        \end{tabular}}
        \caption{Successor map for $5\mathcal{Z} + 1$ sequence.}
        \label{tab:SuccessorMap_5z}
    \end{sidewaystable}

Let $\mathcal{X}$ be the smallest odd integer in an auxiliary cycle, expressed as $2^P + 2^Q + 2^R + 2^1 - 1$, where $P > Q > R$.
The successor map, shown in Table \ref{tab:SuccessorMap_5z}, is constructed according to Theorem \ref{Theorem:AncestorMap_5z} and based on the fact that $\mathcal{X}$ is the smallest odd integer.

The successor map in Table \ref{tab:SuccessorMap_5z} can be interpreted as follows: Successor maps for $R = 2$ and $R = 4$ do not fail and may continue indefinitely while following Theorem \ref{Theorem:AncestorMap_5z} and ensuring that $\mathcal{X}$ remains the smallest odd integer.
However, whether $R = 2$ and $R = 4$ actually form an auxiliary sequence is uncertain without actual calculation or prior knowledge, as is the case here.
In contrast, the successor maps for $R = 3$, $Q = 5$, and $Q = 6$ definitely fail.

Thus, if $\mathcal{X}$ forms an auxiliary cycle, it must be smaller than $2^5$.

    \end{proof}


    \section{Integers that `may' diverge}


    \begin{theorem}[$3\mathcal{Z} + 1$]
        The integers may take a long time to converge to 1, but they cannot escape this convergence indefinitely.
    \end{theorem}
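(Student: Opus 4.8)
The plan is to argue by contradiction: suppose some odd integer $\mathcal{X}$ has an unbounded $3\mathcal{Z}+1$ trajectory $\mathcal{Z}_0 = \mathcal{X}, \mathcal{Z}_1, \mathcal{Z}_2, \ldots$, and track two quantities along it — the Governor index $m_j$ at the $j$th odd step (in the sense of Definition \ref{def:Governor}) and $\log_2 \mathcal{Z}_t$ — showing they are incompatible. The multiplicative bookkeeping is routine: an odd step multiplies by $3$ and adds $1$, contributing $\log_2 3 + o(1)$ to $\log_2$, while an even step contributes $-1$. Hence, writing $N_O(T)$ and $N_E(T)$ for the numbers of odd and even steps up to time $T$, one gets $\log_2 \mathcal{Z}_T = \log_2 \mathcal{X} + (\log_2 3)\,N_O(T) - N_E(T) + o(N_O(T))$, so divergence forces the asymptotic ratio $N_E(T)/N_O(T)$ to be at most $\log_2 3 \approx 1.585$.

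Next I would use Lemma \ref{lemma1}, Theorem \ref{theorem1} and the $3\mathcal{Z}+1$ corollaries to partition the trajectory into alternating \emph{descent phases} — maximal runs with Governor index $m_j > 1$, during which by Lemma \ref{lemma1} the index drops by exactly one per odd step and (from Table \ref{tab:GeneralBehavior}) each odd step is followed by exactly one even step, so a descent from index $m$ to index $1$ contributes $m-1$ odd and $m-1$ even steps — and \emph{trivial-cycle phases}, runs with $m_j = 1$, in which by the $3\mathcal{Z}+1$ corollary (and the ``two even integers between odd integers'' fact established in the proof of Theorem \ref{Theorem:AncestorMap_3z}) each odd step is followed by at least two even steps. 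A descent phase has even/odd ratio exactly $1$; a trivial-cycle macro-step has ratio $\ge 2$. If a fraction $f$ of all odd steps lie in descent phases, the overall ratio is at least $2 - f$, so keeping it $\le \log_2 3$ requires $f \gtrsim 0.415$: the Governor index must be driven back up, and by amounts summing to a positive fraction of all odd steps, infinitely often.

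The crux is then to show the Governor index cannot be replenished that fast. By Theorem \ref{theorem1}, an interruption that raises the Governor from index $1$ to index $m$ occurs only when the next-lowest term $2^P$ of the current integer descends far enough to collide with the Trivial Governor; I would quantify the delay, using that in a trivial-cycle phase the leading block $\left\{\sum_{M>1} b_M 2^M\right\}$ is multiplied by roughly $3/4$ per macro-step while $2^P$ itself drops by one index per macro-step, so the index gap between the leading block and the Governor closes at a bounded rate and a jump of size $m$ needs at least of order $m$ preceding trivial-cycle odd steps to be set up. Since the number of jumps is at most the number of trivial-cycle odd steps, the total descent odd steps $\sum_j (m_j - 1)$ is bounded by a constant times the total trivial-cycle odd steps, forcing $f$ below the threshold; fed back into the step count this yields an asymptotic even/odd ratio strictly exceeding $\log_2 3$, contradicting divergence.

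I expect the third step to be the main obstacle: converting ``$2^P$ collides with the Trivial Governor only after enough contraction'' into a uniform inequality that survives the interaction of the leading block with the Governor over arbitrarily many chained descents is delicate — Lemma \ref{lemma1} and Theorem \ref{theorem1} control one Governor descent at a time, and it is precisely the uniform control of the carry structure across infinitely many of them where the genuine difficulty of the Collatz problem lies. As a fallback I would instead show only the qualitative consequence — that a divergent trajectory must contain descent phases of unbounded length, hence visit arbitrarily large Governor indices while also returning to index $1$ infinitely often — and then seek a contradiction with Theorem \ref{Theorem:AncestorMap_3z} and the non-existence of auxiliary cycles for $3\mathcal{Z}+1$, since a divergent orbit can neither settle into a cycle nor reconcile an unbounded, infinitely-often-reset Governor index with the rigid repeated-configuration constraints those results impose.
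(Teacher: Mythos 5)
Your proposal does not follow the paper's route, and it contains a gap that you yourself flag correctly: the third step. The inequality you need --- that an interruption raising the Governor index from $1$ to $m$ must be ``paid for'' by on the order of $m$ preceding trivial-cycle odd steps, so that the total descent length is controlled by the total trivial-cycle length --- is supported by nothing in Lemma \ref{lemma1} or Theorem \ref{theorem1}, which describe a single descent and a single collision of $2^P$ with the Trivial Governor but give no uniform quantitative bound across chained descents. Worse, the paper's own worked example cuts against it: starting from $2^4 + 2^3 + 2^2 - 1 = 27$, the map $\mathcal{OEOEE}$ produces $2^5 - 1$, i.e.\ a jump to Governor index $5$ after only two odd steps of preparation. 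So the replenishment rate is not obviously bounded in the way your counting argument requires, and the step-ratio comparison against $\log_2 3$ cannot be closed from the results available in the paper. Your fallback also fails for a structural reason: Theorem \ref{Theorem:AncestorMap_3z} and the non-existence of auxiliary cycles constrain \emph{repeating} configurations, whereas a divergent trajectory by definition never repeats, so no contradiction with those results is available to a trajectory that simply keeps growing.

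For comparison, the paper's own proof is much less ambitious and is itself only a plausibility argument: it observes from Table \ref{tab:GeneralBehavior_TrivialCycle} that the $3\mathcal{Z}+1$ sequence always enters a shrinking phase once the single Trivial Governor appears, reduces the question to ``either $m$ is infinite, or every appearance of the Trivial Governor must be interrupted and promoted to a higher-index Governor,'' exhibits the one example above of such a promotion, and then asserts without further argument that sustaining such interruptions forever would require infinitely many carefully chosen terms. It does not quantify anything, and in particular it does not address the step you identify as the crux. Your drift/ratio framework is the more standard and more honest skeleton for a convergence argument, but neither your version nor the paper's closes the essential gap: uniform control of how the non-Governor terms feed back into the Governor over infinitely many descent--interruption rounds. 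If you pursue this, the piece to make precise is exactly your third step, and you should not expect the paper's machinery to supply it.
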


    \begin{proof}

Consider the dynamics of the Collatz-type sequences shown in Table \ref{tab:GeneralBehavior_TrivialCycle}.
For the $5\mathcal{Z} + 1$ sequence, integers may avoid shrinking in value if $m$ is odd, or if the trivial cycle is interrupted, or if the Trivial Governor $2^2 - 1$ is avoided.
In contrast, the $3\mathcal{Z} + 1$ sequence always experiences a shrinking phase, regardless of whether $m$ is even or odd..
 Additionally, there is only one Trivial Governor, and, as discussed in Theorem \ref{theorem1}, all Governors eventually lead to the Trivial Governor.
 This leaves the following options to avoid convergence to 1:

    \begin{itemize}
        \item An infinite value of $m$.
        \item Alternatively, each time the Trivial Governor appears, it is interrupted and transformed into a higher index Governor.
    \end{itemize}

Consider the second approach for $\mathcal{X} = 2^a + 2^3 + 2^2 - 1$.
The Trivial Governor is obtained when

    \begin{align*}
        \mathcal{OEOEE}(\mathcal{X})    &=  2^a + 2^{a - 3} + 2^3 + 2^2 + 2^1 - 1\\
    \end{align*}

If $a = 4$, the Trivial Governor can be transformed into a higher index Governor:

    \begin{align*}
        \mathcal{OEOEE}(\mathcal{X})    &=  2^4 + 2^{4 - 3} + 2^3 + 2^2 + 2 - 1\\
        \mathcal{OEOEE}(\mathcal{X})    &=  2^5 - 1\\
    \end{align*}    

The Governor obtained is higher than the original Governor in $\mathcal{X}$.
However, this method requires an infinite number of carefully chosen terms to indefinitely avoid the Trivial Governor.

Thus, while an infinitely large integer may prolong the convergence to 1, it cannot avoid it entirely.

    \end{proof}


   \section{Conclusion}
   
It is shown that for an odd integer $\mathcal{X}$ to reappear in any Collatz-type sequence, its Governor must be the Trivial Governor of that sequence.
Applied to the $3\mathcal{Z} + 1$ sequence, it is observed that the repeating cycle consists of odd integers that end in $2^1 -1$ and are separated by two even integers, forming the trivial cycle.
No auxiliary cycles are found in this case.

In the $5\mathcal{Z} + 1$ sequence, the odd integers in any repeating cycle either have the Trivial Governor $2^1 - 1$ or $2^2 - 1$.
Furthermore, the smallest odd integers forming an auxiliary cycle are found to be smaller than $2^5$.

Finally, attempts to construct an integer that may diverge in the $3\mathcal{Z} + 1$ sequence reveal that such an integer might not be possible.


   \section*{Data availability statement}

Data availability is not applicable to this article as no new data were created or analysed in this study.

    
    \bibliographystyle{ieeetr}
    \bibliography{B}
    
    \end{document}